\newtheorem{theorem}
{Theorem}[section]
\newtheorem{corollary}
{Corollary}[theorem]
\newtheorem{lemma}[theorem]
{Lemma}
\begin{document}
\title{Numeric Palindromes in Primitive and Non-primitive\\ Pythagorean Triples\footnote{Submitted to: A peer review Mathematics Journal, February 28,2015}}
\author{John Rafael M. Antalan and Richard P. Tagle \\ \\ \\Department of Mathematics and Physics\\ College of Arts and Sciences\\ Central Luzon State University\\ Science City of Munoz, Nueva Ecija, (3120)\\ Philippines}
\maketitle
\abstract {In this article we consider numeric palindromes as components of a Pythagorean triple. We first show that there are infinitely many non-primitive Pythagorean triples that contain (i) a single numeric palindrome component, (ii) two numeric palindrome components and (iii) three numeric palindrome components. We then focus on numeric palindromes in primitive Pythagorean triples. We show that there are infinitely many primitive Pythagorean triples composed of a single and two numeric palindrome components. Open problem and preliminary results related to the open problem are also given.}
\section {Introduction and Statement of the Problem}
\ \ \ \ \ This paper is inspired by the works of Gopalan and his colleagues about Pythagorean triples found in [1] and its reference page. In [1], they determined those Pythagorean triples with a leg represented by a Kepricker number and gave some interesting results. In this paper however, we deal with numeric palindrome in Pythagorean triples.
\bigskip

A search on the web reveals that only few mathematicians and mathematics enthusiasts studied palindromes and Pythagorean triples. For instance in [2], the author studied primitive Pythagorean triples whose perimeter yields palindomic number. In [3], palindromic Pythagorean triples were studied. It can be seen in [3] that one can generate infinitely many non-primitive Pythagorean triples with three numeric palindrome components starting from the triple $(3,4,5)$.   Lastly a short discussion about Pythagorean triples was given in textbook [4] with the result similar in [3].
\bigskip          
 
In this paper however, we consider the infinitude of palindromic Pythagorean triples both primitive and non primitive having a single, double, or triple palindrome component. Our results are as follows:

\begin{enumerate}
\item There are infinitely many non-primitive Pythagorean triples with one numeric palindrome component.
\item There are infinitely many non-primitive Pythagorean triples with two numeric palindrome components.
\item There are infinitely many non-primitive Pythagorean triples with three numeric palindrome components .
\item There are infinitely many primitive Pythagorean triples with one numeric palindrome component.
\bigskip

Notice the similarity of result 3. and the result in [3] and [4]. The difference in this manuscript is that we give a different proof for it. Included in this paper is the result proved by other mathematicians (formerly our conjecture):
\bigskip
\item There are infinitely many primitive Pythagorean triples with two numericpalindrome components.

\end{enumerate}

Lastly we state an open problem related to the topic and show some preliminary results.      
\section{Preliminaries}
\ \ \ \ \ The following preliminary discussion on Pythagorean triples were taken from [5]. 
\bigskip 

A Pythagorean triple is a set of three integers $x,y,z$ such that 
\begin{center}
$x^2+y^2=z^2$.
\end{center} 
The triple is said to be primitive if $gcd(x,y,z)=1.$ Also each pair of integers $x,y,z$ are pairwise relatively prime.
\bigskip

 All of the solutions of the Pythagorean equation $x^2+y^2=z^2$ satisfying the conditions $gcd(x,y,z)=1$, $2|x$, $x,y,z>0$ are given by:

\begin{equation}
x=2st, y=s^2-t^2, z=s^2+t^2
\end{equation}  
for relatively prime integers $s>t>0$ and $s \not \equiv t(mod\ 2)$. 
\bigskip

Lastly, from a primitive Pythagorean triple $x,y,z$  a non-primitive Pythagorean triples can be generated by multiplying some positive integer constant $c$ as a result $cx,cy,cz$ forms a non-primitive Pythagorean triple.  
\bigskip

The following are  some useful notations that utilized later in the main result.
\bigskip

For a triple $(x,y,z)$ and a constant $a$, we define their product $a(x,y,z)=(ax,ay,az)$.
\bigskip

The expression appearing in the triple of the form $a--_n$ means $n$ copies of $a$. For example, if we have $11--_0$ this expression means $0$ copies of $11$ which is also $11$. Some other examples are $11--_3=11111111$ and $60--_2=606060$.
\bigskip

Lastly, if $a\in Z_{10}$, we define the expression $a_k$ as $\underbrace{aa...aa}_{k-times}$. For example if we have $13_2$  we are reffering to the number $133$. Other example is $120_23_31=12003331$.
\bigskip

With these preliminaries we are now ready to show our results.     
\section{Results}
\subsection{Numeric Palindromes in Non-primitive Pythagorean Triples}
\begin{lemma}
Any palindrome with an even number of digits is divisible by 11.
\end{lemma}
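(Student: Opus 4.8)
The plan is to write an arbitrary palindrome with an even number of digits in ordinary positional notation and then exploit the congruence $10 \equiv -1 \pmod{11}$. Suppose the palindrome has $2n$ digits $a_{2n-1}a_{2n-2}\cdots a_1 a_0$ with $a_{2n-1}\neq 0$, so that $N=\sum_{j=0}^{2n-1} a_j 10^j$, and the palindrome property gives $a_j=a_{2n-1-j}$ for every $j$.

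First I would fold the digit string in half, grouping the $2n$ terms into $n$ pairs by matching the index $j$ with the index $2n-1-j$ for $j=0,1,\dots,n-1$. Since $j+(2n-1-j)=2n-1$ is odd, the two paired indices always have opposite parity. Using the palindrome symmetry this yields
\[
N=\sum_{j=0}^{n-1} a_j\left(10^{j}+10^{2n-1-j}\right)=\sum_{j=0}^{n-1} a_j\,10^{j}\left(1+10^{\,2n-1-2j}\right).
\]
Next I would note that each exponent $2n-1-2j$ is odd, and invoke the elementary fact that $u+v$ divides $u^{k}+v^{k}$ for odd $k$; taking $u=10$ and $v=1$ gives $11\mid 10^{\,2n-1-2j}+1$. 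Hence every summand is divisible by $11$, and therefore so is $N$.

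An equivalent route, should one prefer it, is to reduce $N$ modulo $11$ directly: from $10\equiv -1$ one gets $N\equiv\sum_{j=0}^{2n-1}(-1)^{j}a_j\pmod{11}$, and the symmetry $a_j=a_{2n-1-j}$ together with $(-1)^{j}=-(-1)^{2n-1-j}$ makes the alternating digit sum collapse to $0$. Either way the argument is short; the only point needing a little care is the bookkeeping of indices and parities when the digit string is folded, together with the (harmless) remark that interior digits being zero, or the leading digit being nonzero, does not affect the divisibility conclusion. I expect no genuine obstacle beyond stating this cleanly.
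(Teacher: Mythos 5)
Your argument is correct, and in fact you give two proofs. Your second, ``equivalent route'' is exactly the paper's proof: the paper invokes the standard alternating-digit-sum test for divisibility by $11$ and observes that the palindrome symmetry pairs each digit with a digit in a position of opposite parity, so the alternating sum telescopes to $0$. Your primary route is a self-contained variant of the same idea: instead of citing the divisibility test as a black box, you fold the expansion $N=\sum_{j=0}^{2n-1}a_j10^j$ into $n$ pairs $a_j\,10^j\left(1+10^{2n-1-2j}\right)$ and note that $11=10+1$ divides $10^k+1$ for odd $k$, so each summand is individually divisible by $11$. What this buys you is independence from the divisibility rule (which the paper states but does not prove) at the cost of the small index/parity bookkeeping you flag; the paper's version is shorter but leans on the quoted test, and incidentally its indexing ($a_0,\dots,a_{2n}$, which is an odd number of digits) is slightly off, whereas yours is clean. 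Either of your two arguments would stand on its own.
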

\begin{proof}
We know that a number say $n$ is divisible by 11 if and only if the alternate sum and difference of its digits is divisible by 11. For a palindrome with even number of digits say $a_0a_1a_2...a_{2n-1}a_{2n}$, we have $a_0=a_{2n}, a_1=a_{2n-1},... ,a_n=a_{n+1}$ thus, $a_0-a_1+a_2-a_3+...+a_{2n-1}-a_{2n}=0$\ which is divisible by 11.
\end{proof}
\begin{theorem}
There are infinitely many non-primitive Pythagorean triple with one numeric palindrome component.
\end{theorem}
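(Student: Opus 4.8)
The plan is to produce an explicit one-parameter family of non-primitive triples obtained by scaling the primitive triple $(3,4,5)$, choosing the scaling factor so that precisely one of the three scaled entries turns out to be a numeric palindrome. Concretely, I would take the multiplier to be the repdigit $c_k = \underbrace{3\cdots3}_{k} = (10^{k}-1)/3$ for each integer $k \geq 2$, and consider $c_k(3,4,5) = (3c_k,\,4c_k,\,5c_k)$. This is a Pythagorean triple since $(3c_k)^2 + (4c_k)^2 = 25c_k^2 = (5c_k)^2$, and it is non-primitive since $\gcd(3c_k,4c_k,5c_k) = c_k \geq 33 > 1$; as the entries strictly increase with $k$, distinct $k$ give distinct triples, which supplies the ``infinitely many''.

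The substance of the argument is then the palindrome bookkeeping for the three entries. The leg $3c_k = 10^{k}-1 = \underbrace{9\cdots9}_{k}$ is visibly a palindrome. For the other two I would invoke only the elementary observation that a palindrome has equal leading and trailing digits. From $10^{k} < \tfrac{4}{3}(10^{k}-1) < 2\cdot 10^{k}$ the number $4c_k$ has $k+1$ digits with leading digit $1$, while its units digit equals that of $4\times 3 = 12$, namely $2$; since $1 \neq 2$, $4c_k$ is not a palindrome. Similarly $10^{k} < \tfrac{5}{3}(10^{k}-1) < 2\cdot 10^{k}$ forces the leading digit of $5c_k$ to be $1$, whereas its units digit is that of $5\times 3 = 15$, namely $5 \neq 1$, so $5c_k$ is not a palindrome either. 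Hence $c_k(3,4,5)$ has exactly one palindromic component for every $k \geq 2$, and the theorem follows. If desired one can record the exact shapes $4c_k = 1\underbrace{3\cdots3}_{k-1}2$ and $5c_k = 1\underbrace{6\cdots6}_{k-1}5$ in the paper's subscript notation, but the first/last-digit comparison is all that is actually needed.

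There is essentially no obstacle here; the only point needing (trivial) care is the two-sided estimate used to read off the leading digit $1$ of $4c_k$ and $5c_k$, which holds for all $k \geq 1$ and can be checked in one line. As a remark, the same device with other multipliers yields the companion statements in this subsection: scaling $(3,4,5)$ by the repunit $\underbrace{1\cdots1}_{k}$ turns all three entries into repdigits $\underbrace{3\cdots3}_{k},\underbrace{4\cdots4}_{k},\underbrace{5\cdots5}_{k}$ (three palindrome components), while scaling by $\underbrace{2\cdots2}_{k}$ gives $\underbrace{6\cdots6}_{k},\underbrace{8\cdots8}_{k},\underbrace{1\cdots1}_{k}0$, whose third entry ends in $0$ and so is not a palindrome (two palindrome components). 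An equally clean alternative for the present theorem is to scale $(5,12,13)$ by $\underbrace{1\cdots1}_{k}$, yielding $(\underbrace{5\cdots5}_{k},\,1\underbrace{3\cdots3}_{k-1}2,\,1\underbrace{4\cdots4}_{k-1}3)$, in which only the first entry is a palindrome.
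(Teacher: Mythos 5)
Your proposal is correct and takes essentially the same route as the paper's own (second) proof, which multiplies $(3,4,5)$ by the repdigit $\underbrace{3\cdots3}_{n}$ to obtain $(\underbrace{9\cdots9}_{n},\,1\underbrace{3\cdots3}_{n-1}2,\,1\underbrace{6\cdots6}_{n-1}5)$; you merely make explicit the first-versus-last-digit check that the paper leaves implicit. (The paper also offers an alternative proof scaling $(11,60,61)$ by $1\underbrace{01\cdots01}_{}$, but that is not needed.)
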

\begin{proof}
1. To prove this theorem we note that with $s=6$ and $t=5$ in equation (1) we see that 60, 11 and 61 is a primitive Pythagorean triple. Define the number theoretic function $f$ as follows:
\begin{center}

 \[f(n)=\begin{cases}
\ 1 \ if \ n=0\\ \\
\ f(n)=(f(n-1)\cdot 10^2)+1\ if\ n\in Z^+
\end{cases}
\] 
\end{center}
and consider the product $f(n)(11,60,61)$. This product will  always generate a Pythagorean triple of the form $(11--_{n},60--,_{n},61--_{n})$, a non-primitive Pythagorean triple that contains exactly one numeric palindrome component. Since $n$ runs through the set of positive integers we conclude that there are infinitely many such triples.       
\end{proof}
\begin{proof}
2. Starting from the primitive Pythagorean triple (PPT) $(3,4,5)$,consider $3_n(3,4,5)$. This generates the non-primitive Pythagorean triple (NPPT) with one numeric palindrome component for any positive integer $n$: $(9_n,13_{n-1}2,16_{n-1}5)$.  
\end{proof}
\begin{theorem}
There are infinitely many non-primitive Pythagorean triples with two numeric palindrome components.
\end{theorem}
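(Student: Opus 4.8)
The plan is to imitate the strategy behind the previous theorem: start from the primitive Pythagorean triple $(3,4,5)$ and multiply it by a suitable strictly increasing sequence of constants. Concretely, I would take the multiplier to be the repdigit $2_n=\underbrace{22\cdots2}_{n}$ (so $2_n = 2\cdot 1_n$, where $1_n=\underbrace{11\cdots1}_{n}$ is the length-$n$ repunit) and study the non-primitive triple $2_n(3,4,5)$.

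The crux is to read off the three components explicitly. Because $3\cdot 2 = 6 < 10$ and $4\cdot 2 = 8 < 10$, multiplying the repdigit $2_n$ by $3$ or by $4$ produces no carrying, so $3\cdot 2_n = 6_n$ and $4\cdot 2_n = 8_n$; and since $2_n = 2\cdot 1_n$, we get $5\cdot 2_n = 10\cdot 1_n = 1_n0$. Hence
\[
2_n(3,4,5) = (6_n,\ 8_n,\ 1_n0).
\]

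It then remains to verify three things. First, the triple is non-primitive, since $\gcd(6_n,8_n,1_n0) = 2_n\ge 2$ for $n\ge 1$. Second, $6_n$ and $8_n$ are repdigits, hence numeric palindromes. Third, $1_n0$ begins with the digit $1$ and ends with the digit $0$, so it is \emph{not} a palindrome for any $n\ge 1$; thus the triple has exactly two numeric palindrome components. Finally, the components grow with $n$, so as $n$ ranges over the positive integers these triples are pairwise distinct, and there are infinitely many of them.

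Since every step is a direct computation, there is no genuinely hard point: the only real content is the choice of $(3,4,5)$ together with the multiplier $2_n$, which is forced by the twin requirements that multiplying two of the legs by $2$ trigger no carrying (so that the outputs are repdigits) while multiplying the remaining component by $2$ does. If one prefers the recursive packaging used in the first proof of the previous theorem, the same sequence arises from $g(0)=2$ and $g(n)=10\,g(n-1)+2$. One mild point worth recording, in case one insists on \emph{exactly} two palindrome components rather than merely two, is the elementary fact used above that no positive integer ending in the digit $0$ can be a palindrome.
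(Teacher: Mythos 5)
Your proposal is correct and coincides with the paper's second proof of this theorem, which also multiplies $(3,4,5)$ by the repdigit $2_n$ to obtain $(6_n,8_n,1_n0)$; you simply supply the carrying and non-palindromicity details that the paper leaves implicit. (The paper additionally offers an alternative construction via $(10^k+1)^2(3,4,5)$, but your route is the same as its second argument.)
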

\begin{proof}
1. Starting from the primitive Pythagorean triple $(3,4,5)$, consider the product:
\begin{center} 
$(n^2+2n+1)(3,4,5)$ where $n=10^k, k\in Z^+$
\end{center}
 This will generate a triple of the form:
\begin{center}
\[\begin{cases}
\ (363,484,605) \ if \ k=1\\ \\
\ (30_{k-1}60_{k-1}3,40_{k-1}80_{k-1}4,50_{k-2}10_k5)\ if\ k>1.
\end{cases}
\] 
\end{center}
\bigskip

A non-primitive Pythagorean triple that contains exactly two numeric palindrome component. Since $k$ runs through the set of positive integers we conclude that there are infinitely many such triples. 

\end{proof}
\begin{proof}
 2. Starting from the primitive Pythagorean triple (PPT) $(3,4,5)$,consider $2_n(3,4,5)$. This generates the non-primitive Pythagorean triple (NPPT) with two numeric palindrome component for any positive integer $n$: $(6_n,8_n,1_n0)$.  
\end{proof}
\begin{theorem}
There are infinitely many non-primitive Pythagorean triples with three numeric palindrome components .
\end{theorem}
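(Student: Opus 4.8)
The plan is to follow the same scaling strategy used for the two–palindrome theorems, but to choose a multiplier whose decimal digits all lie in $\{0,1\}$, so that multiplying it by each of $3$, $4$, $5$ produces no carries and therefore preserves the digit pattern. Concretely, starting from the PPT $(3,4,5)$ I would consider the product $1_n(3,4,5)$, where $1_n=\underbrace{1\cdots1}_{n}$ is the $n$-digit repunit. Because the digit products $1\cdot 3,\,1\cdot 4,\,1\cdot 5$ are all less than $10$, no carrying occurs and one gets $1_n(3,4,5)=(3_n,4_n,5_n)$, i.e.\ the repdigits $\underbrace{3\cdots3}_n,\ \underbrace{4\cdots4}_n,\ \underbrace{5\cdots5}_n$, each of which is trivially a numeric palindrome.

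The verification then splits into three routine checks. First, $(3_n,4_n,5_n)$ is a Pythagorean triple, since it is the positive integer constant $1_n$ times the Pythagorean triple $(3,4,5)$, exactly as described in the Preliminaries. Second, it is non-primitive for every $n\ge 2$: one has $\gcd(3_n,4_n,5_n)=1_n\cdot\gcd(3,4,5)=1_n\ge 11>1$. Third, distinct values of $n$ give distinct triples, because the first component $3_n$ has exactly $n$ digits; hence letting $n$ run over $\{2,3,4,\dots\}$ produces infinitely many non-primitive Pythagorean triples, each with three numeric palindrome components.

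The same argument in fact goes through verbatim with $1_n$ replaced by \emph{any} palindrome all of whose digits lie in $\{0,1\}$ and whose leading digit is $1$ — for example $10^{2k}+10^{k}+1$, which yields $(30_{k-1}30_{k-1}3,\ 40_{k-1}40_{k-1}4,\ 50_{k-1}50_{k-1}5)$ for $k\in Z^+$ — since multiplying such a number by $3$, $4$, or $5$ again creates no carries and hence again returns a palindrome. Presenting the argument in this form makes it a proof genuinely different from the one in [3] and [4], which is the stated reason for including the theorem.

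I do not expect a real obstacle here; the single point that needs care is the carry-free claim, namely that because every relevant digit product is at most $5$, the decimal expansion of $c\cdot 3$, $c\cdot 4$, $c\cdot 5$ is obtained from that of the $\{0,1\}$-digit number $c$ simply by replacing each $1$ by $3$, $4$, or $5$ respectively and leaving each $0$ fixed. Once that is granted, the palindrome property of the three components follows immediately from the palindrome property of $c$, and everything else is bookkeeping (the Pythagorean identity, the gcd computation, and the distinctness of the triples).
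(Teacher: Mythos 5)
Your proposal is correct and is essentially identical to the paper's own proof, which likewise multiplies $(3,4,5)$ by the repunit $1_k$ to obtain the repdigit triple $(3_k,4_k,5_k)$. The only difference is that you spell out the carry-free justification and a mild generalization, which the paper leaves implicit.
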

\begin{proof}
Starting form the primitive Pythagorean triple (3,4,5), we can form an infinite number of non-primitive Pythagorean triples with all components are palindromes by multiplying appropriate constants. In particular, we can multiply $1_k, k\in Z^+$ to the original primitive triple yeilding the non-primitive triple of the form $(3_k,4_k,5_k)$. 
\end{proof}
\subsection{Numeric Palindromes in Primitive Pythagorean Triples}
\begin{theorem}
There are infinitely many primitive Pythagorean triple with one numeric palindrome component.
\end{theorem}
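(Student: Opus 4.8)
The plan is to produce, directly from the parametrization (1), a one-parameter infinite family of primitive Pythagorean triples in which exactly one entry is forced to be a palindrome and the other two are forced not to be. Concretely I would take $t=2$ and $s=10^{k}+1$ for $k\in Z^{+}$. Since $10^{k}+1$ is odd we have $\gcd(s,t)=\gcd(10^{k}+1,2)=1$ and $s\not\equiv t\pmod 2$, and plainly $s>t>0$; hence (1) certifies that
\[
\bigl(2st,\ s^{2}-t^{2},\ s^{2}+t^{2}\bigr)=\bigl(4(10^{k}+1),\ (10^{k}+1)^{2}-4,\ (10^{k}+1)^{2}+4\bigr)
\]
is a primitive Pythagorean triple for every $k$, and the hypotenuses are strictly increasing in $k$, so the triples are pairwise distinct.

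Next I would read off the base-ten expansions. The leg $2st=4\cdot 10^{k}+4$ equals $44$ when $k=1$ and, for $k\ge 2$, a $4$ followed by $k-1$ zeros followed by a $4$; in either case it is a palindrome. For the remaining two entries, $(10^{k}+1)^{2}=10^{2k}+2\cdot 10^{k}+1$, so $s^{2}-t^{2}=10^{2k}+2\cdot 10^{k}-3$ and $s^{2}+t^{2}=10^{2k}+2\cdot 10^{k}+5$. For $k\ge 1$ both of these lie strictly between $10^{2k}$ and $2\cdot 10^{2k}$, hence each has exactly $2k+1$ digits with leading digit $1$; meanwhile $(10^{k}+1)^{2}$ ends in the digit $1$, so (with a single borrow) $s^{2}-t^{2}$ ends in $7$ and $s^{2}+t^{2}$ ends in $5$. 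Since in each case the first digit $1$ differs from the last digit ($7$, resp.\ $5$), neither $s^{2}-t^{2}$ nor $s^{2}+t^{2}$ is a palindrome. Therefore every member of the family has precisely one palindromic component, and letting $k$ range over $Z^{+}$ gives infinitely many such primitive triples.

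The argument is entirely elementary, so I do not anticipate a real obstacle; the only thing needing care is pinning down the decimal representations of $(10^{k}+1)^{2}\pm 4$ precisely enough to compare first and last digits, and verifying the hypotheses of (1) so that the triples really are primitive. If instead one wanted the palindromic entry to be a repdigit, the same scheme runs with $t=4$ and $s=\underbrace{1\cdots1}_{m}$ (the length-$m$ repunit), giving $2st=\underbrace{8\cdots8}_{m}$; but then one must additionally note that $s^{2}$ has leading digit $1$ and units digit $1$, so that $s^{2}\pm 16$ again has distinct first and last digits and fails to be a palindrome. I would present the first family, since it avoids any appeal to the shape of repunit squares.
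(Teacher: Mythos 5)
Your construction is correct, but it is a genuinely different one from the paper's. The paper also works from the parametrization (1), but it fixes $t=1$ and lets $s$ range over all palindromes with digits in $\{0,1,2,3,4\}$, so that $x=2s$ is a palindrome because doubling such an $s$ produces no carries; it then argues by contradiction that $y=s^2-1$ and $z=s^2+1$ cannot both be palindromes, using $z-y=2$. You instead fix $t=2$ and take the explicit one-parameter family $s=10^{k}+1$, getting the palindromic leg $4(10^{k}+1)$ directly and ruling out the other two components by pinning down their decimal expansions ($2k+1$ digits, leading digit $1$, trailing digits $7$ and $5$). Your route buys two things: the verification that the triple has \emph{exactly} one palindromic component is airtight, since you show each of the other two entries individually fails to be a palindrome, whereas the paper's contradiction argument only excludes the case where $y$ and $z$ are \emph{simultaneously} palindromes; and your family is completely explicit, so distinctness of the triples is immediate. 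What the paper's choice buys is a much larger (denser) family of examples, since $s$ ranges over all small-digit palindromes rather than a single sequence. Both arguments are elementary and both correctly establish the theorem.
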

\begin{proof}
In (1),let $s$ be a palindrome with digits $\in F=\{0,1,2,3,4\}$ and $t=1$ such that $s$ and $t$ satisfies the conditions in (1). It is easy to see that $x$ is a palindrome. Our claim is that the other components $y$, and $z$ are not palindromes. To see this we proceed by contradiction. Note that the difference $z-y=2$. If $y$ and $z$ where palindromes then $z=10_{n-1}1$ and $y=9_n$. Solving for $x$ in this case we see that its units digit is 0. A contradiction to the fact that $x$ is a palindrome.
\bigskip
\end{proof}

For those  primitive Pythagorean triples with two numeric palindrome components, consider table:1 (derived from [6] and [7]). Notice that there are few of them. This observation leads us to assume that there are only finite number of primitive Pythagorean triples with two numeric palindrome components. However, extending our search leads to other  primitive Pythagorean triple with two numeric palindrome components shown in table:2 (derived from [3]). In fact there are infinitely many of them as proved by Prof. Julian Aguirre in [8] and Sir T.D. Noe in [9]. Notice the difference of the two proofs. In [8] infinitude of primitive Pythagorean triple with two numeric palindrome components was established where $x$ and $y$ are palindromes while in [9] $x$ and $z$ where palindromes. 
\begin{table}[h!]
\centering
\begin{tabular}{||c c c||}
\hline 
x & y & z
\\ [0.5ex]
\hline \hline
3 & 4 & 5\\
99 & 20 & 101\\
225 & 272 & 353\\
275 & 252 & 373\\
33 & 544 & 545\\
595 & 468 & 797\\
555 & 572 & 797\\
777 & 464 & 905\\
[1ex]
 \hline
\end{tabular}
\caption{Table of primitive Pythagorean triples with two numeric palindrome components up to $s=81$ with hypotenuse less than 6000.} 
\label{table: 1}
\end{table}
\begin{table}[h!]
\centering
\begin{tabular}{||c c c||}
\hline 
x & y & z
\\ [0.5ex]
\hline \hline
313 & 48984 & 48985\\
34743 & 42824 & 55145\\
55755 & 25652  & 61373\\
52625 & 80808 & 96433\\
575575 & 2152512 & 2228137\\
5578755 & 80308  & 5579333\\
5853585 & 2532352  &  6377873\\
5679765 & 23711732 & 24382493\\
304070403 & 402080204 & 504110405\\
341484143 & 420282024 & 541524145\\
345696543 & 422282224 & 545736545\\
359575953 & 401141104 & 538710545\\
55873637855 &  27280108272  & 62177710753\\
[1ex]
 \hline
\end{tabular}
\caption{Table of other primitive Pythagorean triples with two numeric palindrome components.} 
\label{table: 2}
\end{table}
\bigskip

Lastly notice that the only primitive Pythagorean triple with all its components are palindrome is the triple $(3,4,5)$. We conjecture that this is the only such triple and leave its proof as an open problem.
\bigskip

\subsection{Open Problem}
We restate in this subsection the problem that arose in subsection 2.
\begin{itemize}
\item Prove or Disprove that $(3,4,5)$ is the only primitive Pythagorean triple with all its component are palindrome. 
\end{itemize}

\section{Primitive Pythagorean Triples with Three Numeric\\ Palindrome Components}
While it is an open problem to show the uniqueness of $(3,4,5)$ as the only primitive Pythagorean triple with three numeric palindrome component, we characterize in this section the form of others whenever they exist. 
\begin{theorem}
Primitive Pythagorean triples with three numeric palindrome components must be of the form $(E_d-O_d-O_d)$, $(O_d-E_d-O_d)$, $(O_d-O_d-E_d)$ and $(O_d-O_d-O_d)$, where $O_d$ and $E_d$ represent odd number of digits and even number of digits respectively.
\end{theorem}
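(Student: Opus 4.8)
The plan is to combine Lemma~1 with the fact, recalled in the Preliminaries, that the three components of a primitive Pythagorean triple are pairwise relatively prime. Suppose $(x,y,z)$ is a primitive Pythagorean triple in which $x$, $y$, and $z$ are all numeric palindromes. The first step is to translate the phrase ``even number of digits'' into a divisibility statement: by Lemma~1, any component having an even number of digits is divisible by $11$.

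The main step is then a short counting argument. If at least two of $x,y,z$ had an even number of digits, then those two components would both be divisible by $11$, so $11$ would divide their greatest common divisor; this contradicts the pairwise coprimality of a primitive triple. Hence at most one of the three components can have an even number of digits.

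It remains only to enumerate the surviving possibilities. Either none of the components has an even number of digits, which is the form $(O_d-O_d-O_d)$, or exactly one of them does, and according as the even-digit component is the first, second, or third entry we obtain $(E_d-O_d-O_d)$, $(O_d-E_d-O_d)$, or $(O_d-O_d-E_d)$. These are exactly the four forms in the statement, so the theorem follows.

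I do not foresee a genuine obstacle in this argument; the only points that require care are the correct invocation of Lemma~1 (whose hypothesis is precisely that the palindrome has an even number of digits) and the observation that primitivity is used in an essential way — for non-primitive triples the conclusion is false, as $11\cdot(3,4,5)=(33,44,55)$ shows, since all three of its components are even-digit palindromes.
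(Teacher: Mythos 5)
Your proposal is correct and follows essentially the same route as the paper: invoke Lemma 3.1 to conclude that any even-digit palindrome component is divisible by $11$, then use the pairwise coprimality of a primitive triple to rule out two such components, leaving exactly the four listed forms. The counterexample $11\cdot(3,4,5)=(33,44,55)$ showing that primitivity is essential is a nice addition not present in the paper, but the core argument is identical.
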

\begin{proof}
Since we assumed that the triples are primitive Pythagorean triples then $gcd(x,y)=gcd(x,z)=gcd(y,z)=1$. If it happened that at least two of the components were composed of even number of digits then by Lemma 3.1 they are both divisible by 11. A contradiction to  $gcd(x,y)=gcd(x,z)=gcd(y,z)=1$ and thus a contradiction to our assumption that the triples are primitive.    
\end{proof}
The next lemma was derived from [10]. 
\begin{lemma}
In a primitive Pythagorean triple with $y$ even, and $z>x$,
\begin{enumerate}
\item Exactly one of $x$ or $y$ is divisible by $3$.
\item Leg $y$  is divisible by $4$.
\item Exactly one of $x,y,z$ is divisible by $5$. 
\end{enumerate} 
\end{lemma}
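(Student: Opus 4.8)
The plan is to reduce each of the three assertions to a short computation with quadratic residues (for the primes $3$ and $5$) or to a parity argument (for the power of $2$), and then to close off the ``exactly one'' clauses by invoking the pairwise coprimality of $x,y,z$ that primitivity provides. I would work from the parametrization in~(1), rewritten for the convention of the lemma in which $y$ is the even leg: $y=2st$, $x=s^2-t^2$, $z=s^2+t^2$ with $s>t>0$, $\gcd(s,t)=1$, and $s\not\equiv t\pmod 2$; the hypothesis $z>x$ is then automatic.

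For part 1, reduce modulo $3$. The quadratic residues mod $3$ are $0$ and $1$, so if $3\nmid x$ and $3\nmid y$ then $z^2\equiv x^2+y^2\equiv 1+1\equiv 2\pmod 3$, which is impossible; hence $3$ divides $x$ or $y$. It cannot divide both, for that would give $3\mid\gcd(x,y)=1$. So exactly one of $x,y$ is divisible by $3$. For part 2, since $s\not\equiv t\pmod 2$ exactly one of $s,t$ is even, so $st$ is even and $y=2st$ is divisible by $4$. (One can instead avoid~(1): both $x$ and $z$ are odd, so $x^2\equiv z^2\equiv 1\pmod 8$, whence $y^2=z^2-x^2\equiv 0\pmod 8$ and therefore $4\mid y$.)

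For part 3, reduce modulo $5$, where the quadratic residues are $0,1,4$. If none of $x,y,z$ were divisible by $5$, then $x^2,y^2\in\{1,4\}\pmod 5$, so $x^2+y^2$ is congruent to $2$, $0$, or $3$ mod $5$; the values $2$ and $3$ are non-residues, and the value $0$ would force $5\mid z$, contrary to assumption — a contradiction in every case. Hence $5$ divides at least one of $x,y,z$, and it divides at most one, since any two of $x,y,z$ are coprime. So exactly one of $x,y,z$ is divisible by $5$.

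I do not expect a genuine obstacle here: each part is a finite residue check together with an appeal to pairwise coprimality. The only points requiring care are matching the ``$y$ even'' hypothesis to the parametrization of~(1) (where the even leg was called $x$), and making sure each ``exactly one'' conclusion uses the pairwise coprimality of the components rather than merely $\gcd(x,y,z)=1$.
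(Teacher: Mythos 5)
The paper itself offers no proof of this lemma: it is simply stated as "derived from [10]" (Rosen's \emph{Elementary Number Theory}), so there is no argument in the text to compare yours against. Your proposal is correct and is the standard one: the mod-$3$ and mod-$5$ cases follow from the fact that squares are $\equiv 0,1 \pmod 3$ and $\equiv 0,1,4 \pmod 5$, so $x^2+y^2=z^2$ forces a factor of $3$ into one leg and a factor of $5$ into one component, while the "exactly one" clauses follow from the pairwise coprimality that the paper records in its preliminaries. Part 2 is handled correctly either via the parametrization ($y=2st$ with $st$ even because $s\not\equiv t\pmod 2$) or via the mod-$8$ argument from the oddness of $x$ and $z$; both are sound. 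Your remark about reconciling the lemma's convention ($y$ even) with equation (1) of the paper (where the even leg is called $x$) is apt, since the paper silently switches conventions between Section 2 and this lemma. In short, you have supplied a complete proof where the paper relies on a citation.
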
 
With
Using lemma 4.2, we see that if $x,y,z$ forms a primitive Pythagorean triple, then we have for some relatively prime integers $a,b$ and $c$ the possible forms: 
\begin{table}[h!]
\centering
\begin{tabular}{||c c c||}
\hline 
x & y & z
\\ [0.5ex]
\hline \hline
15a & 4b & c\\
5a & 12b & c\\
3a & 20b & c\\
a & 60b & c\\
3a & 4b & 5c\\
a & 12b & 5c\\
[1ex]
 \hline
\end{tabular}
\caption{Table of forms of primitive Pythagorean triples.} 
\label{table: 3}
\end{table}
\bigskip

If we want to have a primitive Pythagorean triple with three numeric palindrome components we have:
\begin{theorem}
A primitive Pythagorean triple with three numeric palindrome components (whenever exists) takes the form $(5a,12b,c)$, $(15a,4b,c)$, $(3a,4b,5c)$ or $(a,12b,5c)$ for some relatively prime integers $a,b$ and $c$.
\end{theorem}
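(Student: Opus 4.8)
The plan is to begin from the exhaustive list of six shapes for a primitive Pythagorean triple with $y$ even recorded in Table~\ref{table: 3} (which follows from Lemma 4.2, after splitting into cases according to which of the legs and the hypotenuse carry the factors $3$ and $5$), and then to show that two of these six shapes are incompatible with all three entries being numeric palindromes. The two forms to discard are $(3a,20b,c)$ and $(a,60b,c)$: in both of them the even leg $y$ is a multiple of $10$, since $20b=10(2b)$ and $60b=10(6b)$, and $b\ge 1$ forces $y>0$.

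The key observation is an elementary digit fact. A positive integer divisible by $10$ has last digit $0$; if such a number had more than one digit and were a palindrome it would also have leading digit $0$, which is impossible, and no one-digit positive integer is divisible by $10$. Hence in the forms $(3a,20b,c)$ and $(a,60b,c)$ the component $y$ can never be a palindrome, so neither of these forms can produce a primitive Pythagorean triple whose three components are all palindromes.

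Deleting these two possibilities from Table~\ref{table: 3} leaves exactly the four forms $(5a,12b,c)$, $(15a,4b,c)$, $(3a,4b,5c)$ and $(a,12b,5c)$ with $a,b,c$ relatively prime, which is precisely the asserted conclusion. I do not expect any genuine obstacle here; the only points needing care are that the classification in Table~\ref{table: 3} is truly exhaustive (this is exactly what Lemma 4.2 supplies) and that ``numeric palindrome'' is taken in the usual sense of a positive integer written without a leading zero, so that divisibility by $10$ really does rule out being a palindrome.
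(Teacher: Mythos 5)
Your proposal is correct and follows essentially the same route as the paper: eliminate the forms $(3a,20b,c)$ and $(a,60b,c)$ from Table~3 because their even leg is a positive multiple of $10$ and hence cannot be a palindrome. The only difference is that you justify the non-palindromicity of multiples of $10$ explicitly (last digit $0$ would force a leading $0$), whereas the paper simply asserts it; your version is the more careful one.
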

\begin{proof}
The form $(3a,20b,c)$ and $(a,60b,c)$ will never yield a primitive Pythagorean triple with three numeric palindrome components since $20b$ and $60b$ is not a palindrome.  
\end{proof}
\begin{corollary}
For a Palindromic Pythagorean Triple with three numeric palindrome components, the first and last digit of $x$ is 5 or the first and last digit of $z$ is 5.  
\end{corollary}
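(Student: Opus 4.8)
The plan is to read off the corollary directly from Theorem 4.4, adding only one elementary observation about digits. First I would invoke Theorem 4.4 to restrict attention to the four admissible forms $(5a,12b,c)$, $(15a,4b,c)$, $(3a,4b,5c)$ and $(a,12b,5c)$ for relatively prime $a,b,c$. In the first two of these the leg $x$ is a multiple of $5$ (namely $5a$ or $15a$), while in the last two the hypotenuse $z$ is a multiple of $5$ (namely $5c$). So the corollary will follow once we know the following claim: if a numeric palindrome is divisible by $5$, then its first and last digits are both equal to $5$.

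The key step is this digit claim. A positive integer divisible by $5$ has units digit $0$ or $5$. The component in question is a positive integer (it is a side of a Pythagorean triple), and since it is a genuine palindrome its leading digit equals its units digit; but a leading digit $0$ is impossible for a positive integer, so the units digit cannot be $0$. Hence the units digit is $5$, and by the palindrome symmetry the leading digit is $5$ as well. Applying this to $x$ in the first two forms of Theorem 4.4, and to $z$ in the remaining two, produces exactly the stated dichotomy: the first and last digit of $x$ is $5$, or the first and last digit of $z$ is $5$.

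The only point that requires any care — and hence the ``hard part'' of an otherwise immediate argument — is ruling out the possibility that the relevant component is divisible by $10$; this is precisely what the leading-zero observation above does, and it genuinely uses that the component is an honest palindrome (a positive integer read the same forwards and backwards), not merely an expression that happens to have suppressed leading zeros. Once this is in place, no computation involving $a$, $b$, or $c$ is needed, and the corollary is complete.
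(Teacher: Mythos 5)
Your argument is correct and is exactly the reasoning the corollary is meant to encode: the paper itself states this corollary without proof, leaving implicit precisely the step you supply, namely that a palindrome divisible by $5$ cannot end in $0$ (else it would begin with $0$) and so must begin and end in $5$. Applying this to the component forced to be a multiple of $5$ in each of the four forms of the preceding theorem gives the stated dichotomy, so your proposal matches the intended argument.
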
     
\section{Conclusion}
As a conclusion of this paper we successfully showed that there are infinitely many non-primitive Pythagorean triples consisting of a single, double and triple numeric palindrome components. The case is similar for the primitive Pythagorean triple with a single and double numeric palindrome components. While a proof awaits for the uniqueness of $(3,4,5)$ as the only primitive Pythagorean triple whose all components are palindrome.
\section{Acknowledgement}
The authors are highly indebted to Gerry Myerson for his valuable comments related to the topic, Blue for his calculations that leads us the results in table 2 and lastly to Prof. Julian Aguirre of University of the Basque Country and Sir T.D. Noe of Portland Oregon for proving the infinitude of primitive pythagorean triples with double numeric palindrome component and for some helpful comments and suggestions. 
\section{Recommendation}
For future studies, aside in proving the conjecture being stated here, one may extend the idea presented here in other number bases. An extension to $n-tuples$ may also be of high interest.  
\newpage

\end{document}